\newcommand{\mf}{\ensuremath{\mathfrak}}
\newcommand{\mb}{\ensuremath{\mathbb}}
\newcommand{\Tr}{\ensuremath{\text{Tr}}}
\numberwithin{equation}{section}
\newtheorem{thm}[equation]{Theorem}
\newtheorem{lem}[equation]{Lemma}
\newtheorem{prop}[equation]{Proposition}
\theoremstyle{definition}
\theoremstyle{remark}
\newtheorem{ex}[equation]{Example}
\title{Compact symmetric spaces, triangular factorization, and Cayley coordinates}
\author{Derek Habermas}
\begin{document}

\maketitle

\begin{abstract}

Let $U/K$ represent a connected, compact symmetric space, where $\theta$ is an involution of $U$ that fixes $K$, $\phi: U/K \to U$ is the geodesic Cartan embedding, and $G$ is the complexification of $U$.  We investigate the intersection of $\phi(U/K)$ with the Bruhat decomposition of $G$ corresponding to a $\theta$-stable triangular, or LDU,  factorization of the Lie algebra of $G$.  When $g \in \phi(U/K)$ is generic, the corresponding factorization $g=ld(g)u$ is unique, where $l \in N^-, d(g) \in H$, and $u \in N^+$.  In this paper we present an explicit formula for $d$ in Cayley coordinates, compute it in several types of symmetric spaces, and use it to identify representatives of the connected components of the generic part of $\phi(U/K)$. This formula calculates a moment map for a torus action on the highest dimensional symplectic leaves of the Evens-Lu Poisson structure on $U/K$.

\end{abstract}

\section{Introduction}

Let $U/K$ be a connected, irreducible, compact, Riemannian symmetric space on which $U$ acts isometrically.  Then $K$ is the fixed point set of an involution $\theta$ of $U$.  Let $G$ be the complexification of $U$ and $\mf g$ the complexification of the Lie algebra $\mf u$ of $U$.  We assume $\theta$ can be extended to a holomorphic involution of $G$ and we let $\theta$ denote this extension as well as the corresponding involutions of $\mf u$ and $\mf g$.  In this paper we consider the intersection of the image of the Cartan embedding 
\begin{equation*}
\phi: U/K \to U \subseteq G : uK \mapsto u\theta(u^{-1})
\end{equation*}
with the Bruhat (or triangular, or LDU) decomposition
\begin{equation*}
G = \coprod_{w \in W} \Sigma_w^G, \qquad \Sigma_w^G = N^-wHN^+
\end{equation*}
relative to a $\theta$-stable triangular decomposition $\mf g = \mf n^- \oplus \mf h \oplus \mf n^+$.

For a generic element $g$ in this intersection, $g \in \Sigma_1^G \cap \phi(U/K)$, this yields a unique triangular factorization $g=ld(g)u$.  Our main contribution is to produce explicit formulas for the diagonal map $d$ in classical cases when $\theta$ is an inner automorphism, using Cayley coordinates. This formula calculates a moment map for a torus action on the highest dimensional symplectic leaves of the Evens-Lu Poisson structure on $U/K$~\cite{Evens-Lu} studied also by Foth and Otto~\cite{Foth-Otto}, and by Caine~\cite{Caine}. This intersection is also studied in the context of harmonic analysis on symmetric spaces; for more information, see papers by Pickrell~\cite{Pickrell2}, and by Borodin and Olshanski~\cite{Borodin-Olshanski}.

For each type of symmetric space under consideration, we choose a representation of $\mf u$ in $\mf s\mf u(n)$ and a specific involution $\theta$ of $\mf g$ such that $\theta$ fixes each of the subspaces $\mf n^-, \mf h,$ and $\mf n^+$ which, in each representation, always consist of strictly lower triangular, diagonal, and strictly upper triangular matrices, respectively.  This is made precise in section 3.

The formulas for $d$ contain determinants such as $\det(1+X)$, where $X$ is in $i\mf p$, the $-1$-eigenspace of $\theta$ acting on the Lie algebra $\mf u$. Due to the relatively sparse nature of these matrices, these determinants are often easily calculable, and we illustrate this with examples.  The structure of the paper is as follows.

In section 2 we introduce notation and review relevant background for the intersection $\Sigma_1^G \cap \phi(U/K)$.

In section 3 we calculate $d$ in Cayley coordinates.

In section 4 we use this calculation to identify representatives in each connected component of $\Sigma_1^G \cap \phi(U/K)$.

In section 5 we show explicit calculations for $d$ in low dimensional examples, and we apply the results of section 4 to five types of compact symmetric spaces.

In the appendix, some non-standard representations used in the paper are more fully explained.


\section{Background} \label{background}

Here we review the intersection of a compact symmetric space with a compatible Bruhat decomposition; this material is presented in more detail in~\cite{Pickrell2}.  As stated in the introduction, $U/K$ is a connected irreducible compact symmetric space, where $U$ is a connected Lie group acting on the symmetric space isometrically and transitively, $G$ is the complexification of $U$, and $K \subseteq U$ is the connected component, containing the identity, of the fixed point set of an involution $\theta$ of $U$.  In a slight abuse of notation, we also use $\theta$ to denote the induced involution on the Lie algebra $\mf u$ of $U$ as well as its complex linear extension to the Lie algebra $\mf g$ of $G$.  We also assume that $\theta$ extends to a holomorphic involution on $G$ which will also be denoted $\theta$.  Let $g \mapsto g^{-*}$ denote the Cartan involution of $G$ fixing $U$, and let $g^\theta$ denote $\theta(g)$; since the inversion map, $*$, and $\theta$ commute, this notation should not cause confusion. Let $G_0$ denote the fixed point set of the involution of $G$ given by $\sigma:g \mapsto g^{-*\theta}$.  

We have Cartan embeddings of symmetric spaces
\[
\begin{matrix}
U/K & \stackrel{\phi}{\to} & U & : uK \mapsto uu^{-\theta} \\
\downarrow & & \downarrow \\
G/G_0 & \stackrel{\phi}{\to} & G & : gG_0 \mapsto gg^{*\theta}.
\end{matrix}
\]
These are totally geodesic embeddings of symmetric spaces.  The following proposition (Theorem 1a in~\cite{Pickrell2}) characterizes the images of these embeddings as subsets of $G$.

\begin{prop} \label{variety}
Let $\phi$ be the Cartan embedding as stated above. Then we have the following inclusion maps,
\[
\begin{matrix}
\phi(U/K) = \{g \in G : g^{-1} = g^* = g^\theta\}_0 & \to & U = \{g \in G : g^{-1} = g^*\}\\
\downarrow & & \downarrow \\
\phi(G/G_0) = \{g \in G : g^* = g^\theta\}_0 & \to & G
\end{matrix}
\]
where $\{ \cdot \}_0$ denotes the connected component containing the identity.

\end{prop}

Let $\mf u = \mf k \oplus i\mf p$ be the decomposition of $\mf u$ into $+1$ and $-1$ eigenspaces of $\theta$.  By Proposition \ref{variety} we can use the derivative of the Cartan embedding to identify the tangent space of $U/K$ at $1K$ with
\[
i\mf p = \{X \in \mf g : -X = X^* = X^\theta \}.
\]
The exponential map of $\mf g$ maps $i\mf p$ onto $\phi(U/K)$.  (See chapter VII of~\cite{Helgason}.)

Fix a maximal abelian subalgebra $\mf t_0 \subseteq \mf k$.  We then obtain $\theta$-stable Cartan subalgebras
\[
\mf h_0 = Z_{\mf g_0}(\mf t_0) = \mf t_0 \oplus \mf a_0, \quad \mf t = \mf t_0 \oplus i\mf a_0, \quad \text{and} \quad \mf h = \mf h_0^{\mathbb C}
\]
of $\mf g_0$, $\mf u$, and $\mf g$, respectively, where $Z_{\mf g_0}(\mf t_0)$ is the centralizer of $\mf t_0$ in $\mf g_0$ and $\mf a_0 \subseteq \mf p$ (see (6.60) of~\cite{Knapp}).  Let $T_0 = exp(\mf t_0)$  and $T = exp(\mf t)$ correspond to maximal tori in $K$ and $U$, respectively.

We obtain a $\theta$-stable triangular decomposition $\mf g = \mf n^- \oplus \mf h \oplus \mf n^+$ so that $\sigma(\mf n^{\pm}) = \mf n^{\mp}$.  (See p. 709 in~\cite{Pickrell2}.)  Let $N^\pm = exp(\mf n^\pm)$ and $H = exp(\mf h)$.  We also let $W = W(G,T)$ denote the Weyl group, $W=N_U(T)/T \cong N_G(H)/H$.  Corresponding to this triangular decomposition of $\mf g$, we have the Bruhat decomposition of the group $G$,
\[
G = \coprod_{w \in W} \Sigma_w^G, \quad \Sigma_w^G = N^-wHN^+,
\]
where $\Sigma_w^G$ is diffeomorphic to $(N^- \cap wN^-w^{-1}) \times H \times N^+$.  Elements in $\Sigma_1^G$ are called ``generic."  Define 
\begin{equation}
d:\Sigma_1^G \to H: g \mapsto d(g) \quad \text{ if } g=ld(g)u \label{defined}
\end{equation}
where $l \in N^-, d(g) \in H$ and $u \in N^+$. Since this factorization is unique for generic elements, the map $d$ is well defined.

Intersecting the Bruhat decomposition of $G$ with $\phi(U/K)$ we obtain a decomposition, indexed by $W$, of the symmetric space.  Let $\Sigma_w^{\phi(U/K)}$ denote $\Sigma_w^G \cap \phi(U/K)$. Theorems 2 and 3 from~\cite{Pickrell2} examine the intersections of the symmetric spaces and varieties mentioned in Proposition \ref{variety} with $\Sigma_w^G$ for arbitrary $w \in W$.  The following proposition (which follows immediately from part (e) of Theorem 2 and Theorem 3 in~\cite{Pickrell2}) summarizes some facts from these theorems about $\Sigma_1^{G}$ and its intersection with $\phi(U/K)$. For any group $\Gamma$, let $\Gamma^{(2)}$ denote $\{ g \in \Gamma \, | \, g^2=1 \}$.

\begin{prop} \label{connected} (a) Each connected component of $\Sigma_1^{\phi(U/K)}$ contains an element ${\bf w} \in T_0^{(2)}$, which is unique up to multiplication by elements in $exp(i\mf a_0)^{(2)}$. 

(b) If ${\bf w} \in T_0^{(2)}$, then ${\bf w} \in \phi(U/K)$ if and only if there exists ${\bf w}_1 \in N_U(T_0)$ such that $\phi({\bf w}_1K) = {\bf w}$. 

(c) If $T \subseteq K$, then $i\mf a_0 = 0$, and so $\pi_0(\Sigma_1^{\phi(U/K)})$ is in one-to-one correspondence with $$\{ {\bf w} \in T_0^{(2)} \, | \, \exists {\bf w}_1  \in N_U(T_0) \text{ such that } \phi({\bf w}_1K) = {\bf w} \}.$$
\end{prop}


\section{The diagonal map in Cayley coordinates} \label{general}

In this section we compute, in Cayley coordinates, the diagonal map $d:\Sigma_1^G \to H$ and its restriction to $\Sigma_1^{\phi(U/K)}$ for compact symmetric spaces $U/K$ of types $AIII$, $DIII$, $CI$, $CII$, and $BDI$. For these, the complexification $G$ of $U$ is either $SL(n,\mb C)$, $SO(n,\mb C)$, or $Sp(n/2,\mb C)$.  Let $N^+_{SL}$, $N^-_{SL}$, and $H_{SL}$ be the subgroups of $SL(n,\mb C)$ consisting of upper triangular unipotent, lower triangular unipotent, and diagonal matrices, respectively.  Let $\mathcal T$ denote the anti-holomorphic involution of $SL(n,\mb C)$ given by $\mathcal T(g) = (g^{-1})^*$ where $*$ denotes conjugate transpose.  Let $\tau$ denote anti-transpose (reflection across the anti-diagonal), the holomorphic anti-involution of $GL(n,\mb C)$ given by $g^\tau = J_n g^t J_n^{-1}$ where $J_n \in GL(n,\mb C)$ has entries equal to 1 on the anti-diagonal and 0 elsewhere.  Also let $\tau$ denote its restriction to any subgroup of $GL(n,\mb C)$ as well as the derivatives acting on the corresponding Lie algebras.

We embed $SO(n,\mb C)$ (respectively, $Sp(n/2,\mb C)$) into $SL(n,\mb C)$ as the fixed point set of a holomorphic involution $\Theta$ of $SL(n,\mb C)$ such that $\Theta$ preserves $N^+_{SL}$, $N^-_{SL}$, and $H_{SL}$, and such that $\mathcal T\Theta = \Theta\mathcal T$.  For $G=SO(n,\mb C)$, define $\Theta(g) = (g^{-1})^\tau$.  For $G=Sp(n/2,\mb C)$, define $\Theta(g) = I_{\frac{n}{2},\frac{n}{2}}(g^{-1})^\tau I_{\frac{n}{2},\frac{n}{2}}^{-1}$, where $I_{\frac{n}{2},\frac{n}{2}}$ is the $n \times n$ identity matrix with the first $n/2$ diagonal entries negated.  Each $\Theta$ has the specified properties; for more information see Appendix A.  For each $G$, let
$$
N^+ = G \cap N^+_{SL}, \quad N^- = G \cap N^-_{SL} \quad \text{and} \quad H = G \cap H_{SL},
$$
and let $\mf n^+$, $\mf n^-$ and $\mf h$ be their Lie algebras.  Then $\mf g = \mf n^+ \oplus \mf h \oplus \mf n^-$ is a triangular decomposition of $\mf g$.

For each type of symmetric space $U/K$ we choose a holomorphic involution $\theta$ of $SL(n,\mb C)$ such that it commutes with both $\mathcal T$ and $\Theta$, the triangular decomposition is $\theta$-stable, and $G_0 = G^{\mathcal T\theta}$.  Note that $\mathcal T\theta = \sigma$ as defined in section \ref{background}.  The restriction of $\theta$ to $G$, the restriction to $U$, and the corresponding involutions of the Lie algebras $\mf g$ and $\mf u$ will still be denoted $\theta$.  The choices for $\theta$ are given the following table.
\begin{equation}
\begin{array}{|c|c|c|} \hline
\text{Type} & U/K & \theta \\ \hline
AIII & SU(m+n)/S(U(m)\times U(n)) & Ad(I_{m,n}) \\ \hline
DIII & SO(2n)/U(n) & Ad(I_{n,n}) \\ \hline
CI & Sp(n)/U(n) & Ad(I_{n,n}) \\ \hline
CII & Sp(p+q)/Sp(p)\times Sp(q) & Ad(I_{p,2q,p}) \\ \hline
BDI & SO(p+q)/SO(p)\times SO(q), \quad p \text{ even} & Ad(I_{\frac{p}{2},q,\frac{p}{2}}) \\ \hline
\end{array} \label{symtable}
\end{equation}
The matrix $I_{a,b}$ (or $I_{a,b,a}$) is a diagonal matrix with the first $a$ diagonal entries $-1$ and the next $b$ diagonal entries $1$ (and the next $a$ diagonal entries $-1$, respectively).  (For type $BDI$, if $p$ and $q$ are both odd, $\theta$ is an outer automorphism of $SO(p+q)$.  We address this case in section \ref{calculations}.) In the discussion that follows, if the symmetric space is not specified, we assume that $\theta = Ad(\hat{I})$ and that matrices have dimension $n \times n$.

Define the Cayley map by
\[
\Phi:\mf u(n) \to \{g \in U(n)| -1 \not\in spec(g)\}:X
\mapsto g=(1-X)(1+X)^{-1}. 
\]
Note that $\Phi$ is invertible by $g \mapsto (1-g)(1+g)^{-1}$.

\begin{lem} \label{addmult} Suppose that $\psi:GL(n,\mb C) \to GL(n,\mb C)$ is an automorphism or an anti-automorphism and that $\psi$ can be extended to a linear operator $\bar{\psi}$ on $End_{\mb C}(\mb C^n)$.  Let $X \in End_{\mb C}(\mb C^n)$ be in the tangent space to $GL(n,\mb C)$ at $1$.  If $-X = X^* = \bar{\psi}(X)$, then $X$ is in the domain of the Cayley map $\Phi$, and
\[
\Phi(X) \in \{ g \in GL(n,\mb C) \, | \, g^{-1} = g^* = \psi(g) \}.
\]
\end{lem}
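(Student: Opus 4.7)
For part (a), since $GL(n,\mb C)$ is an open subset of $End_{\mb C}(\mb C^n)$, the tangent space at any point is canonically $End_{\mb C}(\mb C^n)$ itself. The plan is to use the affine curve $\gamma(t) = g + tX$, which lies in $GL(n,\mb C)$ for small $|t|$, and on which $\psi$ agrees with $\bar\psi$ by hypothesis. Differentiating $\psi(\gamma(t)) = \bar\psi(g) + t\bar\psi(X)$ at $t=0$ immediately yields $d\psi|_g(X) = \bar\psi(X)$.

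For part (b), the first observation is that $-X = X^*$ forces $X$ to be skew-Hermitian with purely imaginary spectrum, so $-1 \not\in spec(X)$ and $1+X$ is invertible; hence $X$ lies in the domain of $\Phi$. To verify the unitarity relation $\Phi(X)^{-1} = \Phi(X)^*$, the next step is to use that $1-X$, $1+X$, and their inverses all commute (being polynomials in $X$), together with $X^* = -X$, to compute
\[
\Phi(X)^* = (1+X^*)^{-1}(1-X^*) = (1-X)^{-1}(1+X) = (1+X)(1-X)^{-1} = \Phi(X)^{-1}.
\]

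It then remains to show $\psi(\Phi(X)) = \Phi(X)^{-1}$. Since $GL(n,\mb C)$ is open and dense in $End_{\mb C}(\mb C^n)$ and the linear extension $\bar\psi$ is automatically continuous, the identity $\psi(AB) = \psi(A)\psi(B)$ (or $\psi(B)\psi(A)$ in the anti-automorphism case) extends to the same identity for $\bar\psi$ on all of $End_{\mb C}(\mb C^n)$. Combined with $\bar\psi(1) = 1$, the hypothesis $\bar\psi(X) = -X$, and additivity of $\bar\psi$, this gives $\bar\psi(1 \pm X) = 1 \mp X$, from which one deduces $\bar\psi((1+X)^{-1}) = (1-X)^{-1}$. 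In the automorphism case this produces $\bar\psi(\Phi(X)) = (1+X)(1-X)^{-1} = \Phi(X)^{-1}$; in the anti-automorphism case it produces $\bar\psi(\Phi(X)) = (1-X)^{-1}(1+X) = \Phi(X)^*$, which equals $\Phi(X)^{-1}$ by the previous step. The only real obstacle is bookkeeping: the two cases must be handled in parallel with a single linear extension $\bar\psi$, and the content of the lemma is essentially the statement that the Cayley map intertwines the additive involutions $X \mapsto X^*$ and $X \mapsto \bar\psi(X)$ on $End_{\mb C}(\mb C^n)$ with the multiplicative involutions $g \mapsto g^{-1*}$ and $g \mapsto \psi(g)$ on $GL(n,\mb C)$.
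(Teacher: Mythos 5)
Your proposal is correct and follows essentially the same route as the paper: differentiate along the affine curve $g+tX$ for part (a), then apply the (anti-)multiplicativity of $\psi$ to the factorization $(1-X)(1+X)^{-1}$, using $\bar\psi(1\pm X)=1\mp X$ and the commutativity of $1-X$ with $(1+X)^{-1}$ in the anti-automorphism case. Your extra details (imaginary spectrum of $X$ guaranteeing invertibility of $1\pm X$, the explicit unitarity check) are fine elaborations, and the density argument for extending multiplicativity to all of $End_{\mb C}(\mb C^n)$ is not actually needed since every factor involved already lies in $GL(n,\mb C)$.
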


\begin{proof} Let $\psi$ be an automorphism (or an anti-automorphism) of $GL(n, \mb C)$. Then since $End_{\mb C}(\mb C^n)$ is complete, and $End_{\mb C}(\mb C^n)\backslash GL(n,\mb C)$ has measure zero, we have $\bar{\psi}(XY) = \bar{\psi}(X)\bar{\psi}(Y)$ (or $\bar{\psi}(XY) = \bar{\psi}(Y)\bar{\psi}(X)$) for all $X,Y \in End_{\mb C}(\mb C^n)$.  Now suppose $-X = X^* = \bar{\psi}(X)$ and let $g = \Phi(X)$.  Then $X$ is skew Hermitian and $g$ is unitary.  So,
\begin{align}
\psi(g) & = \bar{\psi}\left( (1-X)(1+X)^{-1} \right) \nonumber \\
& = \bar{\psi} (1-X) (\bar{\psi}(1+X))^{-1} \label{addmult1} \\
& = (1+X)(1-X)^{-1} = g^{-1}. \nonumber
\end{align}

Note that if $\psi$ is an anti-automorphism, \eqref{addmult1} follows from the fact that $1-X$ and $(1+X)^{-1}$ commute.
\end{proof}

\begin{prop} \label{conjugation} Let $U/K$ be one of the symmetric spaces in table \eqref{symtable} with corresponding involution $\theta$.  Then
 $\Phi(i\mf p) \subseteq \phi(U/K)$. 
\end{prop}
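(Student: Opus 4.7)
The plan is to apply Lemma \ref{addmult}(b) with $\psi = \theta$ to obtain the defining relations $g^{-1}=g^*=g^\theta$ for $g=\Phi(X)$, then verify separately that $\Phi(X)$ actually lies in $G$ when $G$ is orthogonal or symplectic (rather than just in $SL(n,\mb C)$), and finally invoke connectedness to land in the identity component required by Proposition \ref{variety}.

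First, I would recall the identification $i\mf p = \{X \in \mf g : -X = X^* = X^\theta\}$ noted just after Proposition \ref{variety}.  For every entry of \eqref{symtable}, $\theta = Ad(I_{\cdot,\cdot,\dots})$ is conjugation by a real diagonal matrix, so it extends linearly to $End_{\mb C}(\mb C^n)$ with $\bar\theta = \theta$.  Lemma \ref{addmult}(b) then applies directly with $\psi=\theta$ and yields
\[
\Phi(X) \in \{g \in SL(n,\mb C) : g^{-1} = g^* = g^\theta\}.
\]

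Second, in the cases $G=SO(n,\mb C)$ or $Sp(n,\mb C)$, I must check that $\Phi(X) \in G = SL(n,\mb C)^\Theta$, not merely in $SL(n,\mb C)$.  Since $X \in \mf g$, differentiating $\Theta$ at the identity gives $X^\tau = -X$ in the orthogonal case and $I_{n,n} X^\tau I_{n,n}^{-1} = -X$ in the symplectic case.  Combined with the anti-multiplicativity $(AB)^\tau = B^\tau A^\tau$, the commutation of $1-X$ with $(1+X)^{-1}$, and the identity $(1\pm X)^\tau = 1 \mp X$, a direct computation gives
\[
\Theta(\Phi(X)) = \bigl(\Phi(X)^{-1}\bigr)^\tau = (1-X^\tau)^{-1}(1+X^\tau) = (1+X)^{-1}(1-X) = \Phi(X),
\]
with the obvious conjugation by $I_{n,n}$ inserted in the symplectic case.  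Hence $\Phi(X) \in G$.

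For the identity-component clause of Proposition \ref{variety}(a), the path $t \mapsto \Phi(tX)$, $t \in [0,1]$, lies entirely in $\{g \in G : g^{-1}=g^*=g^\theta\}$, because $tX \in i\mf p$ for every $t$ and the two preceding steps apply uniformly in $t$.  This path joins $\Phi(X)$ to $\Phi(0) = 1$, so $\Phi(X) \in \{g^{-1}=g^*=g^\theta\}_0 = \phi(U/K)$.  The main technical obstacle I anticipate is the $\Theta$-verification in the symplectic case, where the conjugation by $I_{n,n}$ must be tracked carefully through $\tau$ and the Cayley map; the remaining steps are essentially formal applications of Lemma \ref{addmult} together with elementary connectedness.
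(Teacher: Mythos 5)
Your proposal follows essentially the same route as the paper: Lemma \ref{addmult}(b) applied with $\psi=\theta$ gives the relations $g^{-1}=g^*=g^\theta$, a separate check handles membership in $SO(n,\mb C)$ or $Sp(n,\mb C)$ (the paper does this by a second application of Lemma \ref{addmult}, with $\psi=\tau$, resp.\ $Ad(I_{n,n})\circ\tau$, which is exactly the computation you carry out by hand), and connectedness of $i\mf p$ places the image in the identity component. The one step you elide is the determinant: Lemma \ref{addmult}(b) only yields $\Phi(X)\in GL(n,\mb C)$, so your claim that it lands in $SL(n,\mb C)$ --- which is precisely what must be checked in the AIII case, where $\Theta$ plays no role --- needs the short extra argument the paper supplies: $g^\theta=g^{-1}$ and invariance of the determinant under conjugation force $\det g=(\det g)^{-1}=\pm 1$, and continuity along $t\mapsto\Phi(tX)$ from $\Phi(0)=1$ then gives $\det g=1$. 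With that sentence inserted, your argument is complete and matches the paper's.
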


\begin{proof} By Proposition \ref{variety}, we must show that, for each $U/K$,
\begin{equation*}
\Phi(i\mf p) \subseteq \{g \in U | g^{-1} = g^{\theta} \}_0.
\end{equation*}
Each involution $\theta$ meets the criteria of Lemma \ref{addmult}.  Therefore, since $i\mf p$ is connected, by continuity of $\Phi$
we have
\begin{equation*}
\Phi(i\mf p) \subseteq \{g \in U(n) | g^{-1} = g^{\theta}\}_0.
\end{equation*}
Furthermore, since the
determinant is fixed under conjugation, we have $\det(g) = \det(g^\theta) = \det(g^{-1}) = (\det(g))^{-1}$ 
which implies that $\det(g) = \pm 1$.  By continuity of $\Phi$, and
since $0 \in i\mf p$, we have $\det(g) = 1$.  So, 
\begin{equation*}
\Phi(i\mf p) \subseteq \{g \in SU(n) | g^{-1}=g^{\theta}\}_0.
\end{equation*}
All that remains to be shown is that $\Phi(i\mf p) \subseteq U$.  In the
case where $U=SU(n)$, we are done.  For $U=SO(n)$, note that $\tau$ meets the criteria of Lemma \ref{addmult}, since our representation of $\mf s\mf o(n)$ lies in the $-1$
 eigenspace of $\tau$.  Therefore,
\begin{equation*}
\Phi(i\mf p) \subseteq \{g \in SU(n) | g^{-1}=g^\tau\} = U.
\end{equation*}
The case where $U=Sp(n/2)$ follows similarly, since our representation of $\mf s\mf p(n/2)$
lies in the $-1$ eigenspace of $Ad(I_{\frac{n}{2},\frac{n}{2}}) \circ \tau$.
\end{proof}

Note that multiplication of a matrix $A$ by $\hat{I} = I_{a,b}$ (or $I_{a,b,a}$) on the left has the effect of negating the first $a$ rows of $A$ and fixing the next $b$ rows (and negating the following $a$ rows, respectively), and multiplication by $\hat{I}$ on the right has the corresponding effect on columns of $A$. Thus, conjugation of $A$ by $\hat{I}$ fixes the top left $a \times a$ block of $A$, negates the $a \times b$ block to its right, and so on. We will refer to these alternately as the blocks fixed by $\theta$ and the blocks negated by $\theta$. For example, in type $AIII$, the $m \times n$ and $n \times m$ off-diagonal blocks of $g \in U = SU(m+n)$ are negated by $\theta$, and the $m \times m$ and $n \times n$ diagonal blocks are fixed by $\theta$, corresponding to $K \cong S(U(m) \times U(n))$. 

To simplify the notation, let $I_{k}$ denote $I_{k, n-k}$ when it is understood from context to be an $n \times n$ matrix; in particular, let $I_0 = 1$ and $I_n = -1$. Also, let $A[k]$ denote the principal $k \times k$ block of the matrix $A$.

\begin{lem} \label{minors} Let $X \in \mf s\mf u(n)$ and $g=\Phi(X)$.  Then, for $1 \le k \le n$,
\begin{equation*}
\det(g[k]) = \frac{\det(1+I_kX)}{\det(1+X)}.
\end{equation*}
\end{lem}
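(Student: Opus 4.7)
The plan is to partition $X$ and $g$ into $2\times 2$ blocks relative to the splitting $n = k + (n-k)$, solve for the top-left $k\times k$ block $g[k]$ directly from the defining equation $g(1+X) = 1-X$, and then recognize the two determinants on the right-hand side of the claim as Schur-complement determinants.

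Write $X = \begin{pmatrix} A & B \\ C & D \end{pmatrix}$ with $A$ of size $k\times k$, and partition $g$ similarly. Because $X \in \mf{su}(n)$ is skew-Hermitian, its principal submatrix $D$ is also skew-Hermitian, so its eigenvalues are purely imaginary and $1+D$ is automatically invertible. The first block-row of the equation $g(1+X) = 1-X$ then reads
\begin{align*}
g_{11}(1+A) + g_{12}C &= 1-A,\\
g_{11}B + g_{12}(1+D) &= -B.
\end{align*}
Solving the second equation for $g_{12} = -(1+g_{11})B(1+D)^{-1}$ and substituting into the first produces
\[
g[k] \;=\; g_{11} \;=\; \left[(1-A) + B(1+D)^{-1}C\right]\left[(1+A) - B(1+D)^{-1}C\right]^{-1}.
\]

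Next I would apply the Schur-complement determinant identity $\det\begin{pmatrix} P & Q\\ R & S\end{pmatrix} = \det(S)\det(P - QS^{-1}R)$ to the two matrices
\[
1+X \;=\; \begin{pmatrix} 1+A & B\\ C & 1+D\end{pmatrix} \qquad\text{and}\qquad 1+I_kX \;=\; \begin{pmatrix} 1-A & -B\\ C & 1+D\end{pmatrix},
\]
using the observation that left-multiplication by $I_k$ negates the first $k$ rows of $X$. In each case the Schur complement is taken with respect to the $(1+D)$-block, and the common factor $\det(1+D)$ cancels between the numerator and denominator of $\det(g[k])$, producing $\det(1+I_kX)/\det(1+X)$ as claimed.

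I do not anticipate a substantive obstacle: the only delicate point is that the Schur-complement formula requires $1+D$ to be invertible, which here is automatic from the skew-Hermiticity of $X$ (inherited by every principal submatrix). The rest is routine block-matrix arithmetic and careful bookkeeping of the signs introduced by $I_k$.
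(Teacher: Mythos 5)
Your proof is correct. It uses the same block partition $n = k + (n-k)$ as the paper, but the route through it is genuinely different. The paper simply multiplies out $(1+I_kX)(1+X)^{-1}$ in block form and observes that the bottom block-row of $1+I_kX$ coincides with that of $1+X$, so the product is block upper triangular of the form $\left(\begin{smallmatrix} g[k] & * \\ 0 & 1 \end{smallmatrix}\right)$; taking determinants finishes the argument in one line, and the only invertibility needed is that of $1+X$ itself. You instead solve the block linear system $g(1+X)=1-X$ for $g_{11}$, obtaining it as a product of two Schur complements, and then apply the Schur determinant identity to both $1+X$ and $1+I_kX$. This costs you two extra invertibility checks: $1+D$ (which you correctly dispose of via skew-Hermiticity of the principal submatrix) and the Schur complement $(1+A)-B(1+D)^{-1}C$ itself, whose invertibility you use implicitly when writing $g[k]=NM^{-1}$; you should note that it follows from $\det(1+X)=\det(1+D)\det(M)\neq 0$. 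What your version buys is an explicit closed form for the block $g[k]$, not just its determinant; what the paper's version buys is brevity and fewer hypotheses to verify. Both are valid.
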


\begin{proof} Write $X = \left[
\begin{array}{c|c}
X_1 & X_2 \\ \hline
X_3 & X_4
\end{array}
\right]$ and $(1+X)^{-1} = \left[
\begin{array}{c|c}
Y_1 & Y_2 \\ \hline
Y_3 & Y_4
\end{array}
\right]$, where $X_1, Y_1 \in M_{k \times k}(\mb C)$, and so on.  Then
\begin{align*}
(1+I_kX)(1+X)^{-1} & = \left[
\begin{array}{c|c}
(1-X_1)Y_1 -X_2Y_2 & * \\ \hline
X_3Y_1 + (1+X_4)Y_3 & X_3Y_2 + (1+X_4)Y_4
\end{array}
\right] \\
& = \left[
\begin{array}{c|c}
g[k] & * \\ \hline
0_{(n-k) \times k} & 1_{(n-k) \times (n-k)}
\end{array}
\right].
\end{align*}
Taking determinants, the claim follows.
\end{proof}

Consider the diagonal map
$$d:\Sigma_1^{SL(n,\mb C)} \to H_{SL}: g \mapsto d(g) \quad \text{ if } g=ld(g)u$$
where $l \in N^-_{SL}, d(g) \in H_{SL}$ and $u \in N^+_{SL}$. Since $G \subseteq SL(n, \mb C)$ for each symmetric space under consideration, we shall let $d$ also denote the restriction of this map to $\Sigma_1^G$ to correspond with \eqref{defined}.

\begin{lem} \label{mainthm}  Let $g \in \Sigma_1^{SL(n, \mb C)}$ such that $g=\Phi(X)$ for some $X \in \mf s\mf u(n)$. Then $\det(1+I_kX) \neq 0$ for $1 \le k \le n$, and
\begin{align*}
d(g) = diag \bigg( & \dfrac{\det(1+I_{1}X)}{\det(1+X)}, \frac{\det(1+I_{2}X)}{\det(1+I_{1}X)}, \frac{\det(1+I_{3}X)}{\det(1+I_{2}X)}, \dots \\
 &  \dots , \frac{\det(1+I_{n}X)}{\det(1+I_{n-1}X)} = \frac{\det(1-X)}{\det(1+I_{n-1}X)} \bigg).
\end{align*}
\end{lem}

\begin{proof}  Let $X\in \mf s\mf u(n)$ such that $g = \Phi(X)$ is generic, and let $g = ld(g)u$ as described above.  Then, by Gaussian elimination, $\det(g[k]) \neq 0$ for $1 \le k \le n$, and 
\begin{equation}
d(g) = diag \left( \det(g[1]), \dfrac{\det(g[2])}{\det(g[1])}, \dfrac{\det(g[3])}{\det(g[2])}, \dots , \dfrac{\det(g)}{\det(g[n-1])} \right). \label{gauss}
\end{equation}
Then the proposition follows immediately from Lemma \ref{minors}.
\end{proof}


\section{Identifying $T_0^{(2)} \cap \phi(U/K)$ with Cayley coordinates} \label{boldwsection}

In this section, motivated by Proposition \ref{connected}, we use Cayley coordinates to explicitly identify $T_0^{(2)} \cap \phi(U/K)$ for each type of symmetric space in table \ref{symtable} using Lemma \ref{mainthm}. This is not completely straightforward, as no ${\bf w} \in T_0^{(2)} \cap \phi(U/K)$ but the identity is in the image of the Cayley map $\Phi$.

\begin{ex} \label{S2e} Let $U/K = SU(2)/U(1) \cong \mb CP^1 \cong S^2$.  Let
\begin{equation*}
X =
\begin{bmatrix}
0 & z \\
-\bar{z} & 0
\end{bmatrix}
\in i\mf p \subseteq \mf s\mf u(n).
\end{equation*}
and let $g=\Phi(X)$.  If $g$ is generic, then
\begin{equation*}
d(g)=
\begin{bmatrix}
\frac{1-|z|^2}{1+|z|^2} & 0 \\
0 & \frac{1+|z|^2}{1-|z|^2}
\end{bmatrix}.
\end{equation*}
There are two connected components of $\Sigma_1^{\phi(SU(2)/U(1))}$.  By Proposition \ref{connected}, these are indexed by
\begin{equation*}
T_0^{(2)} = \{ \pm \left[
\begin{array}{rr}
 1 & 0 \\
 0 & 1
\end{array}
\right]\}
\end{equation*}
Obviously, $\Phi(0)=+1$, but $-1 \not\in \Phi(i\mf p)$.  However, letting $|z|$ tend to infinity, we see that
\begin{equation*}
\lim_{|z|\to \infty} \Phi(
\begin{bmatrix}
0 & z \\
-\bar{z} & 0
\end{bmatrix}
) = \lim_{|z|\to \infty} 
\begin{bmatrix}
\frac{1-|z|^2}{1+|z|^2} & 0 \\
0 & \frac{1+|z|^2}{1-|z|^2}
\end{bmatrix}
= 
\begin{bmatrix}
-1 & 0 \\
0 & -1
\end{bmatrix}.
\end{equation*}
\end{ex}
Since $\phi(SU(2)/U(1))$ is connected and complete, this calculation verifies that $-1 \in \phi(SU(2)/U(1))$. Note that $\det(X)$ appears (up to a sign) as a summand in $\det(1+I_kX)$. The next theorem generalizes this technique.

First we need some notation to let us talk precisely about submatrices. Let $A$ be an $n \times n$ matrix, and let $0 \le l \le n$.  Let $Q_{l,n}$ denote the set of all subsets of $\{1, \dots , n\}$ with cardinality $l$.  Let $\alpha = \{i_1, \dots ,i_l\} \in Q_{l,n}$, and let $A[\alpha]$ denote the $l \times l$ submatrix consisting of the intersection of rows $i_1, \dots ,i_l$ and columns $i_1, \dots ,i_{l}$ of $A$.

Now, viewing $A$ as an operator on $\mb C^n$, from Fredholm theory we have
\begin{align*}
\det (1 + A) & = \sum_{l=0}^n \Tr (\wedge^l(A)) \\
& = \sum_{l=0}^n \sum_{1 \le i_1 < \dots < i_l \le n} \langle \wedge^l(A)e_{i_1} \wedge \dots \wedge e_{i_l},\, e_{i_1} \wedge \dots \wedge e_{i_l} \rangle \\ 
& = \sum_{l=0}^n \sum_{\alpha \in Q_{l,n}} \det A[\alpha]. 
\end{align*}
(For convenience, we define $\det A[\emptyset] = 1$.) Applying this calculation to the result of Lemma \ref{mainthm}, we find the $k^{th}$ entry of $d(g)$ can be written
\begin{equation}
[d(g)]_{kk} = \frac{\det (1+I_kX)}{\det (1+I_{k-1}X)} = \frac{\displaystyle{\sum_{l=0}^{n} \sum_{\alpha \in Q_{l,n}} \det (I_kX)[\alpha]}}{\displaystyle{\sum_{l=0}^{n} \sum_{\alpha \in Q_{l,n}} \det (I_{k-1}X)[\alpha]}}. \label{dformula}
\end{equation}
This shows that the non-zero entries of $d(g)$ are ratios of sums of determinants of submatrices of $I_kX$, for $0 \le k \le n$. Furthermore, since multiplication of $X$ by $I_k$ on the left negates the first $k$ rows of $X$, $\det(1 + I_kX)$ has the same summands for each $0 \le k \le n$, up to a sign.

Next we recall some facts from linear algebra and Lie theory, and establish notation. Recall that, since $T_0 \subseteq H_{SL}$, it consists of diagonal matrices, and so $T_0^{(2)} = \{ {\bf w} \in T_0 \, | \, {\bf w}^2 = 1 \}$ consists of diagonal matrices whose diagonal entries are $\pm 1$. Let ${\bf w} \in T_0^{(2)}$. If ${\bf w}$ has exactly $l$ entries equal to $-1$, then define $\alpha_{\bf w} \in Q_{l,n}$ by $i \in \alpha_{\bf w}$ if and only if $[{\bf w}]_{ii} = -1$. Then multiplication of a diagonal matrix $A$ by ${\bf w}$ (on the left or the right) negates the $i^{th}$ diagonal entry of $A$ if and only if $i \in \alpha_{\bf w}$. Also, if $\theta = Ad({\hat I})$, define $\alpha_\theta$ by $i \in \alpha_\theta$ if and only if $[\hat{I}]_{ii} = -1$. It follows that $[A]_{i,j}$ is in a block negated by $\theta$ if and only if either $i \in \alpha_\theta$ and $j \not\in \alpha_\theta$, or $i \not\in \alpha_\theta$ and $j \in \alpha_\theta$. Finally, the Weyl group of $U$ acts on $T$ by conjugation; that is, for $w_1 \in W$, let ${\bf w}_1 \in w_1$, then conjugation of a (diagonal) element $A \in T_0$ by ${\bf w}_1$ performs a permutation $\sigma_1$ on the diagonal entries of $A$. In particular, ${\bf w}_1$ can be obtained by performing $\sigma_1$ on the rows of an element of $T_0$, possibly with sign changes.

\begin{thm} \label{boldwthm} Let $U/K$ be one of the symmetric spaces in table \ref{symtable} with $\theta = Ad(\hat{I})$ where $\hat{I}$ has the form $I_{a,b}$ or $I_{a,b,a}$, and let ${\bf w} \in T_0^{(2)}$ with the block structure induced by $\theta$. Then ${\bf w} \in \phi(U/K)$ if and only if the number of $-1$ entries in the $b \times b$ diagonal block of ${\bf w}$ is the same as the number of $-1$ entries outside the $b \times b$ diagonal block. More precisely, ${\bf w} \in \phi(U/K)$ if and only if exactly half of $\alpha_{\bf w}$ is contained in $\alpha_\theta$; that is, $|\alpha_{\bf w} \cap \alpha_\theta | = |\alpha_{\bf w} \backslash \alpha_\theta |$.
\end{thm}

\begin{proof}
Suppose ${\bf w} \in \phi(U/K)$. By proposition \ref{connected} there exists ${\bf w}_1 \in N_U(T_0)$ such that ${\bf w} = {\bf w}_1{\bf w}_1^{-\theta} = {\bf w}_1\hat{I}{\bf w}_1^{-1}\hat{I}^{-1}$, and so ${\bf w}\hat{I} = {\bf w}_1\hat{I}{\bf w}_1^{-1}$. Since conjugation of $\hat{I}$ by ${\bf w}_1$ permutes the diagonal entries of $\hat{I}$, thus fixing the number of negative entries of $\hat{I}$, and since multiplication of $\hat{I}$ by ${\bf w}$ changes the sign of $[\hat{I}]_{i,i}$ if and only if $i \in \alpha_{\bf w}$, we have $|\alpha_{\bf w} \cap \alpha_\theta | = |\alpha_{\bf w} \backslash \alpha_\theta |$.

Conversely, let ${\bf w} \in T_0^{(2)}$ such that $|\alpha_{\bf w} \cap \alpha_\theta | = |\alpha_{\bf w} \backslash \alpha_\theta |$. We shall construct $X \in i\mf p$ such that $\lim_{t\to \infty} d(\Phi(tX)) = {\bf w}$. This will suffice, as $\Phi(i\mf p) \subseteq \phi(U/K)$, and $\phi(U/K)$ is complete.

Let $s=|\alpha_{\bf w} \cap \alpha_\theta | = |\alpha_{\bf w} \backslash \alpha_\theta |$. If $s=0$, then ${\bf w} = 1 \in \phi(U/K)$, so assume $s \ge 1$. Let $\alpha_{\bf w} \cap \alpha_\theta = \{i_1, \dots ,i_s\}$ and $\alpha_{\bf w} \backslash \alpha_\theta = \{j_1, \dots ,j_s\}$ such that they are each enumerated in ascending order. Whether $\hat{I}$ has the form $I_{a,b}$ or $I_{a,b,a}$, $[{\bf w}]_{i_r,i_r} = -1$ is in an $a \times a$ diagonal block, and $[{\bf w}]_{j_r,j_r} = -1$ is in the $b \times b$ diagonal block, for all $1 \le r \le s$.


Case 1: $\mf u = \mf s\mf u(n) = \{X \in \mf s\mf l(n,\mb C) | -X = X^*\}$. Then $U/K$ is of type $AIII$ and $\theta = Ad(I_{m,n})$. Choose $X \in \mf s\mf l(n,\mb C)$ by $[X]_{i_r,j_r} = 1$ and $[X]_{j_r,i_r} = -1$ for all $1 \le r \le s$, with all other entries zero. Then $X$ is skew-Hermitian by construction, and all non-zero entries of $X$ are in blocks negated by $\theta$, since $i_r \in \alpha_\theta$ and $j_r \not\in \alpha_\theta$ for all $1 \le r \le s$. Therefore, $-X = X^* = X^\theta$; that is, $X \in i\mf p$.

Let $t \in \mb R$. Then $[d(\Phi(tX))]_{k,k} = \det (1+tI_kX) / \det (1 + tI_{k-1}X)$ for $1 \le k \le n$ by Lemma \ref{mainthm}. By \eqref{dformula}, this is a ratio of polynomials in $t$ whose terms are identical up to a sign. By construction, $X[\alpha_{\bf w}]$ is the largest submatrix of $X$ with non-zero determinant. Thus, the leading term of $\det(1+tI_kX)$ is $\det(tI_kX)[\alpha_{\bf w}] = \pm t^{2s}$. Furthermore, the leading terms of the numerator and denominator of $[d(\Phi(tX))]_{k,k}$, $\det(tI_kX)[\alpha_{\bf w}]$ and $\det(tI_{k-1}X)[\alpha_{\bf w}]$, respectively, differ by a sign exactly when $k \in \alpha_{\bf w}$. Hence, 
\begin{align*}
\lim_{t\to \infty} [d(\Phi(tX))]_{k,k} &= \frac{\lim_{t \to \infty} \det(tI_kX)[\alpha_{\bf w}]}{\lim_{t \to \infty} \det(tI_{k-1}X)[\alpha_{\bf w}]} \\
&= \begin{cases} -1 \quad \text{ if } k \in \alpha_{\bf w} \\ 1 \quad \text{ if } k \not\in \alpha_{\bf w} \end{cases} \\
&= [{\bf w}]_{k,k}
\end{align*}
As $d$ and $\Phi$ are continuous, ${\bf w} = \lim_{t\to \infty} d(\Phi(tX)) \in \phi(U/K)$.

Case 2: $\mf u = \mf s\mf p(n/2) = \{X \in \mf s\mf l(n,\mb C) | -X = X^* = Ad(I_{\frac{n}{2},\frac{n}{2}})X^{\tau}\}$ (see Appendix A). Suppose $U/K$ is of type $CI$; then $\theta = Ad(I_{\frac{n}{2},\frac{n}{2}})$. Choose $X \in \mf s\mf l(n,\mb C)$ by $[X]_{i_r,j_r} = 1$ and $[X]_{j_r,i_r} = -1$ with all other entries zero. As above, we have $-X = X^* = X^\theta$, so we must show that $-X = Ad(I_{\frac{n}{2},\frac{n}{2}})X^{\tau}$. Note that since ${\bf w} \in Sp(n/2)$, ${\bf w}^* = I_{\frac{n}{2},\frac{n}{2}}^{\hspace{0pt}}{\bf w}^{\tau}I_{\frac{n}{2},\frac{n}{2}}^{-1} = {\bf w}^{\tau\theta}$. Also, as diagonal blocks are fixed by $\theta$ and ${\bf w}$ is diagonal and real, ${\bf w} = {\bf w}^\tau$; that is, ${\bf w}$ is symmetric across the anti-diagonal. Since $\theta = Ad(I_{\frac{n}{2},\frac{n}{2}})$, it follows immediately that $\alpha_{{\bf w}^\tau} \cap \alpha_\theta = \alpha_{\bf w} \setminus \alpha_\theta$ and $\alpha_{{\bf w}^\tau} \setminus \alpha_\theta = \alpha_{\bf w} \cap \alpha_\theta$. Since $\alpha_{\bf w} \cap \alpha_\theta$ and $\alpha_{\bf w} \setminus \alpha_\theta$ are enumerated in ascending order, $X=X^\tau$, and so $-X = X^{\tau\theta} = Ad(I_{\frac{n}{2},\frac{n}{2}})X^\tau$. Therefore, $X \in i\mf p$.

Suppose $U/K$ is of type $CII$, then $\theta = Ad(I_{p,2q,p})$. (So $n=2p+2q$.) Choose $X \in \mf s\mf l(n,\mb C)$ by $[X]_{i_r,j_r} = 1$ and $[X]_{j_r,i_r} = -1$ with all other entries zero. Again, we have $-X = X^* = X^\theta$, and we must show that $-X = Ad(I_{\frac{n}{2},\frac{n}{2}})X^{\tau}$. First note that, as in type $CI$, ${\bf w}={\bf w}^\tau$, and so the $2q \times 2q$ middle block of ${\bf w}$ is symmetric across the anti-diagonal. Thus, $s$ is even, and it follows that $i_1, \dots ,i_{\frac{s}{2}}, j_1, \dots ,j_{\frac{s}{2}} \le n/2$, and $i_{\frac{s}{2}+1}, \dots ,i_s, j_{\frac{s}{2}+1}, \dots ,j_s > n/2$. Hence, the non-zero entries of $X$ are in blocks fixed by $Ad(I_{\frac{n}{2},\frac{n}{2}})$. Furthermore, since $\theta = Ad(I_{p,2q,p})$, in contrast to the type $CI$ case we have $\alpha_{{\bf w}^\tau} \cap \alpha_\theta = \alpha_{\bf w} \cap \alpha_\theta$ and $\alpha_{{\bf w}^\tau} \setminus \alpha_\theta = \alpha_{\bf w} \setminus \alpha_\theta$. Thus, by construction, $-X=X^\tau=Ad(I_{\frac{n}{2},\frac{n}{2}})X^\tau$. Therefore, $X \in i\mf p$.

By the same argument as in case 1, ${\bf w} = \lim_{t\to \infty} d(\Phi(tX)) \in \phi(U/K)$.

Case 3: $\mf u = \mf s\mf o(n) = \{X \in \mf s\mf l(n,\mb C) | -X = X^* = X^\tau\}$ (see Appendix A). Suppose $U/K$ is of type $BDI$, where $n=p+q$ and $p$ is even. Then $\theta = Ad(I_{\frac{p}{2},q,\frac{p}{2}})$. Choose $X \in \mf s\mf l(n, \mb C)$ by $[X]_{i_r,j_r} = 1$ and $[X]_{j_r,i_r} = -1$ for all $1 \le r \le s$ with all other entries zero. Then $-X = X^* = X^\theta$. The argument here is similar to the one for type $CII$; $s$ is even, and since $\alpha_{{\bf w}^\tau} \cap \alpha_\theta = \alpha_{\bf w} \cap \alpha_\theta$ and $\alpha_{{\bf w}^\tau} \setminus \alpha_\theta = \alpha_{\bf w} \setminus \alpha_\theta$, we have $-X = X^\tau$. So $X \in i\mf p$.

Suppose $U/K$ is of type $DIII$; then $\theta = Ad(I_{\frac{n}{2},\frac{n}{2}})$. We have ${\bf w} ={\bf w}_1{\bf w}_1^\theta$ where ${\bf w}_1 \in w_1 \in W$. It is well known that the Weyl group of $U=SO(2n)$ acts by even permutations, in this case, on the diagonal entries of elements in $T_0$, via conjugation. Since $w_1$ has order two, it can be written as a product of disjoint transpositions. As each transposition in $w_1$ corresponds to two negative entries of ${\bf w}$, it follows that $s$ is even. Therefore, choose $X \in \mf s\mf l(n, \mb C)$ by 
\[
[X]_{i_r,j_r} = \begin{cases} 1 \text{ if } i_r \le \frac{s}{2} \\ -1 \text{ if } i_r > \frac{s}{2} \end{cases} \qquad [X]_{j_r,i_r} = \begin{cases} -1 \text{ if } i_r \le \frac{s}{2} \\ 1 \text{ if } i_r > \frac{s}{2} \end{cases} 
\]
with all other entries of $X$ zero. By construction, $-X=X^*=X^\theta$. Since $\theta = Ad(I_{\frac{n}{2},\frac{n}{2}})$, we have $\alpha_{{\bf w}^\tau} \cap \alpha_\theta = \alpha_{\bf w} \setminus \alpha_\theta$ and $\alpha_{{\bf w}^\tau} \setminus \alpha_\theta = \alpha_{\bf w} \cap \alpha_\theta$. Thus, $-X = X^\tau$, and so $X \in i\mf p$.
As in the first two cases, ${\bf w} = \lim_{t\to \infty} d(\Phi(tX)) \in \phi(U/K)$.
\end{proof}


\section{Explicit Calculations Of $d$ and $T_0^{(2)} \cap \phi(U/K)$} \label{calculations}

We now apply the results of sections \ref{general} and \ref{boldwsection} for each type of symmetric space in table \eqref{symtable}.  Throughout this section, $X \in i\mf p$ and $g = \Phi(X)$. As noted in section \ref{boldwsection}, in each representation, each ${\bf w} \in T_0^{(2)}$ is a diagonal matrix whose diagonal entries are $\pm 1$.

\subsection{Type AIII}

Symmetric space: $SU(m+n)/S(U(m) \times U(n)) \cong
Gr(m,\mb C^{m+n})$  \\ 
Involution: $\theta:X \mapsto Ad(I_{m,n})(X) \quad$ \\
Block structure:
\begin{equation}
\mf k = \left\{ \left[ 
\begin{array}{c|c} 
 A & 0 \\ \hline
 0 & B 
\end{array} 
\right] \, | \text{ trace} = 0 \right\} \qquad i\mf p = \left\{ \left[
\begin{array}{c|c}
 0 & Z \\ \hline
 -Z^* & 0
\end{array}
\right] \right\} \label{compgrassip}
\end{equation}
where $A \in \mf u(m)$, $B \in \mf u(n)$, and $Z \in M_{m \times n}(\mb C)$.  Note that $Z$ in \eqref{compgrassip} is the graph coordinate for $Gr(n,\mb C^{m+n})$. 

For example, if $m=1$ then $U/K \cong \mb CP^n$, so
\begin{equation*}
X = \left[
\begin{array}{c|c}
 0 & Z \\ \hline
-Z^* & 0
\end{array}
\right] \quad \text{where } Z=\begin{bmatrix} z_1 & \dots & z_n \end{bmatrix}.
\end{equation*}
By \eqref{dformula}, we have $[d(g)]_{1,1} = (1 - \sum |z_{i}|^2)/(1 + \sum |z_{i}|^2)$, and
\begin{equation*}
[d(g)]_{k,k} = 
\frac{1 + \sum\limits_{i=1}^{k-1} |z_{i}|^2 - \sum\limits_{j=k}^{n}
  |z_{j}|^2}{1 + \sum\limits_{i=1}^{k-2} |z_{i}|^2 -
  \sum\limits_{j=k-1}^{n} |z_{j}|^2}, \qquad 2 \le k \le n+1. 
\end{equation*}

For $\mb CP^1 \cong S^2$, we have $i\mf p \cong \mb C$.  In Cayley coordinates, the above formula yields $[d(g)]_{1,1}=(1-|z|^2)/(1+|z|^2)$, which is the height function in stereographic coordinates (under projection from the south pole) or in the $z$ coordinate on the Riemann sphere.

By Theorem \ref{boldwthm}, ${\bf w} \in T_0^{(2)} \cap \phi(U/K)$ if and only if the number of $-1$ entries in the $m \times m$ upper diagonal block is equal to the number in the $n \times n$ lower diagonal block; there are $\begin{pmatrix} m+n \\ m \end{pmatrix}$ such ${\bf w}$.

\subsection{Type CI}

Symmetric Space: $Sp(n)/U(n)$ \\
Involution: $\theta:X \mapsto Ad(I_{n,n})(X) \quad$  \\
Block structure:
\begin{equation*}
\mf k = \left\{ \left[ 
\begin{array}{c|c} 
 A & 0 \\ \hline
 0 & -A^\tau 
\end{array} 
\right] \right\} \qquad i\mf p = \left\{ \left[
\begin{array}{c|c}
 0 & Z \\ \hline
 -Z^* & 0
\end{array}
\right] \right\}
\end{equation*}
where $A \in \mf u(n)$, and $Z \in M_{n \times n}(\mb C)$ such that $Z = Z^\tau$.

This is a subspace of $Gr(n,\mb C^{2n})$; the condition $-X = Ad(I_{n,n})X^\tau$ restricts $\Phi(X)$ to $Sp(n)$. For example, if $n=2$, then
\begin{equation*}
X=\left[
\begin{array}{c|c}
 0 & Z \\ \hline
 -Z^* & 0
\end{array}
\right], \quad \text{where } Z=
\begin{bmatrix}
 z_{11} & z_{12} \\
 z_{21} & z_{11}
\end{bmatrix},
\end{equation*}
and by \eqref{dformula}, we have
\begin{align*}
d(g)=diag( & \tfrac{1-|z_{12}|^2+|z_{21}|^2-\det
  ZZ^*}{1+2|z_{11}|^2+|z_{12}|^2+|z_{21}|^2+\det ZZ^*}, \tfrac{1-2|z_{11}|^2-|z_{12}|^2-|z_{21}|^2+\det
  ZZ^*}{1-|z_{12}|^2+|z_{21}|^2-\det ZZ^*},\\
& \tfrac{1-|z_{12}|^2+|z_{21}|^2-\det
  ZZ^*}{1-2|z_{11}|^2-|z_{12}|^2-|z_{21}|^2+\det ZZ^*},  \tfrac{1+2|z_{11}|^2+|z_{12}|^2+|z_{21}|^2+\det ZZ^*}{1-|z_{12}|^2+|z_{21}|^2-\det ZZ^*}).
\end{align*}

As noted in the proof of Theorem \ref{boldwthm}, if ${\bf w} \in T_0^{(2)}$ then ${\bf w}^\tau={\bf w}$. Therefore, all ${\bf w}$ satisfying the condition of Theorem \ref{boldwthm} that are symmetric across the anti-diagonal are in $T_0^{(2)} \cap \phi(Sp(n)/U(n))$; there are $2^n$ such ${\bf w}$.

\subsection{Type CII}
Symmetric Space: $Sp(p+q)/Sp(p) \times Sp(q) \cong Gr(p,\mb H^{p+q})$ \\
Involution: $\theta:X \mapsto Ad(I_{p,2q,p})X \quad$  \\
Block Structure:
\begin{equation*}
\mf k = \left\{
\begin{bmatrix}
 A & 0 & 0 & B \\
 0 & C & D & 0 \\
 0 & -D^* & -C^\tau & 0 \\
-B^* & 0 & 0 & -A^\tau
\end{bmatrix}
\right\}, \  i\mf p = \left\{
\begin{bmatrix}
 0 & Z_1 & Z_2 & 0 \\
-Z_1^* & 0 & 0 & Z_2^\tau \\
-Z_2^* & 0 & 0 & -Z_1^\tau \\
 0 & -Z_2^{*\tau} & Z_1^{*\tau} & 0
\end{bmatrix}
\right\}
\end{equation*}
where $A=-A^*, B=B^\tau, C=-C^*, D=D^\tau, Z_1,Z_2 \in M_{p \times q}(\mb C)$.  That is, $\left[
\begin{array}{cc}
A & B \\
-B^* & -A^\tau
\end{array}
\right] \in \mf s\mf p(p)$, and $\left[
\begin{array}{cc}
C & D \\
-D^* & -C^\tau
\end{array}
\right] \in \mf s\mf p(q)$.  So $\mf k \cong \mf s\mf p(p) \oplus \mf s\mf p(q)$. 

For example, if $p=q=1$, then $U/K \cong \mb HP^1$, so
\begin{equation*}
X = \left[
\begin{array}{c|c|c}
 0 & Z & 0 \\ \hline
-Z^* & 0 & -Z^\tau \\ \hline
0 & Z^{*\tau} & 0
\end{array}
\right] \quad \text{where } Z= \begin{bmatrix} z_1 & z_2 \end{bmatrix},
\end{equation*}
and, using \eqref{dformula}, $d(g)$ simplifies to\\
\begin{align*}
d(g)=diag\big(& \tfrac{1-|z_1|^2-|z_2|^2}{1+|z_1|^2+|z_2|^2}, \tfrac{1+|z_1|^2-|z_2|^2+(|z_1|^2+|z_2|^2)^2}{1-(|z_1|^2+|z_2|^2)^2},\\
& \tfrac{1-(|z_1|^2+|z_2|^2)^2}{1+|z_1|^2-|z_2|^2 +
  (|z_1|^2+|z_2|^2)^2}, \tfrac{1+|z_1|^2+|z_2|^2}{1-|z_1|^2-|z_2|^2}\big).
\end{align*}

By Theorem \ref{boldwthm} we have ${\bf w} \in T_0^{(2)} \cap \phi(Sp(p+q)/Sp(p) \times Sp(q))$ if and only if ${\bf w}$ is symmetric across the anti-diagonal (as in type $CI$), and has an equal (even) number of $-1$ entries in the $2q \times 2q$ center block, the ``$Sp(q)$ part," as in the $p \times p$ outer blocks combined, the ``$Sp(p)$ part." There are $\begin{pmatrix} p+q \\ p \end{pmatrix}$ such ${\bf w}$.

\subsection{Type DIII}

Symmetric Space: $SO(2n)/U(n)$ \\
Involution: $\theta:X \mapsto Ad(I_{n,n})(X) \quad$  \\
Block structure:
\begin{equation*}
\mf k = \left\{ \left[ 
\begin{array}{c|c} 
 A & 0 \\ \hline
 0 & -A^\tau 
\end{array} 
\right] \right\} \qquad i\mf p = \left\{ \left[
\begin{array}{c|c}
 0 & Z \\ \hline
 -Z^* & 0
\end{array}
\right] \right\}
\end{equation*}
where $A \in \mf u(n)$, and $Z \in M_{n \times n}(\mb C)$ such that $Z = -Z^\tau$.

This is also a subspace of $Gr(n,\mb C^{2n})$; the condition $-X = X^\tau$ restricts $\Phi(X)$ to $SO(2n)$. In particular, the anti-diagonal entries of $X$ must be zero for all $X \in i\mf p$. For example, if $n=3$, we have
\begin{equation*}
X=\left[
\begin{array}{c|c}
 0 & Z \\ \hline
 -Z^* & 0
\end{array}
\right], \quad \text{where } Z=
\begin{bmatrix}
 z_{11} &  z_{12} & 0 \\
 z_{21} & 0 & -z_{12} \\
 0 & -z_{21} & -z_{11}
\end{bmatrix},
\end{equation*}
and, using \eqref{dformula}, $d(g)$ simplifies to \\
\begin{align*}
d(g) = diag\big(& \tfrac{1-|z_{11}|^2+|z_{21}|^2-|z_{12}|^2}{1+|z_{11}|^2+|z_{21}|^2+|z_{12}|^2}, \tfrac{(1+|z_{11}|^2+|z_{21}|^2-|z_{12}|^2)(1-|z_{11}|^2-|z_{21}|^2-|z_{12}|^2)}{(1-|z_{11}|^2+|z_{21}|^2-|z_{12}|^2)(1+|z_{11}|^2+|z_{21}|^2+|z_{12}|^2)}, \\
& \tfrac{1-|z_{11}|^2-|z_{21}|^2-|z_{12}|^2}{1+|z_{11}|^2+|z_{21}|^2-|z_{12}|^2}, \tfrac{1+|z_{11}|^2+|z_{21}|^2-|z_{12}|^2}{1-|z_{11}|^2-|z_{21}|^2-|z_{12}|^2},\\
&
\tfrac{(1-|z_{11}|^2+|z_{21}|^2-|z_{12}|^2)(1+|z_{11}|^2+|z_{21}|^2+|z_{12}|^2)}{(1+|z_{11}|^2+|z_{21}|^2-|z_{12}|^2)(1-|z_{11}|^2-|z_{21}|^2-|z_{12}|^2}, \tfrac{1+|z_{11}|^2+|z_{21}|^2+|z_{12}|^2}{1-|z_{11}|^2+|z_{21}|^2-|z_{12}|^2}\big).
\end{align*}

By Theorem \ref{boldwthm}, for ${\bf w} \in T_0^{(2)} \cap \phi(SO(2n)/U(n))$, there are as many $-1$ entries in the first diagonal block as in the second.  Also, as noted in the proof of Theorem \ref{boldwthm}, $|\alpha_{\bf w} \cap \alpha_\theta |$ $|\alpha_{\bf w} \setminus \alpha_\theta |$ are even. Therefore, ${\bf w} \in T_0^{(2)} \cap \phi(SO(2n)/U(n))$ if and only if ${\bf w}$ is symmetric across the anti-diagonal, and has an even number of $-1$ entries in each $n \times n$ diagonal block; there are $2^{n-1}$ such ${\bf w}$.

\subsection{Type BDI} \label{realgrass}
Symmetric Space: $SO(p+q)/SO(p)\times SO(q) \cong Gr(p,\mb R^{p+q})$ \\ 
Involution: $\theta:X \mapsto  Ad(\hat{I})X \quad$ (Inner if and only if $pq$ is even)

Case 1: $p$ and $q$ are not both odd.  Without loss of generality, assume $p$ is even.  Then $\hat{I} = I_{\frac{p}{2}, q, \frac{p}{2}}$.

\noindent Block structure:
\begin{equation*}
\mf k = \left\{
\begin{bmatrix}
 A & 0 & B \\
 0 & C & 0 \\
-B^* & 0 & -A^\tau
\end{bmatrix}
\right\}, \quad i\mf p = \left\{
\begin{bmatrix}
 0 & Z & 0 \\
-Z^* & 0 & -Z^\tau \\
 0 & Z^{*\tau} & 0
\end{bmatrix}
\right\} 
\end{equation*}
where $A=-A^*, B=-B^\tau, C=-C^*=-C^\tau, Z \in M_{\frac{p}{2} \times q}(\mb C)$.  That is, $C \in \mf s\mf o(q)$, and $\left[
\begin{array}{cc}
A & B \\
-B^* & -A^\tau
\end{array}
\right] \in \mf s\mf o(p)$.  So $\mf k \cong \mf s\mf o(p) \oplus
\mf s\mf o(q)$.

For example, if $p=6$ and $q=1$, then $U/K \cong \mb RP^{6}$, so
\begin{equation*}
X=\left[
\begin{array}{c|c|c}
 0 & Z & 0 \\ \hline
 -Z^* & 0 & -Z^\tau \\ \hline
 0 & Z^{*\tau} & 0
\end{array}
\right], \quad \text{where } Z= \begin{bmatrix} z_3 \\ z_2 \\ z_1 \end{bmatrix}.
\end{equation*}
By \eqref{dformula}, we have
\begin{align*}
d(g)=diag\big(& \tfrac{1 + 2|z_1|^2 + 2|z_2|^2}{1 + 2|z_1|^2 + 2|z_2|^2 +
  2|z_3|^2}, \tfrac{1 + 2|z_1|^2}{1 + 2|z_1|^2 + 2|z_2|^2}, \tfrac{1}{1+2|z_1|^2}, 1, \\
& \tfrac{1+2|z_1|^2}{1}, \tfrac{1+2|z_1|^2+2|z_2|^2}{1+2|z_1|^2}, \tfrac{1+2|z_1|^2+2|z_2|^2+2|z_3|^2}{1+2|z_1|^2+2|z_2|^2}\big).
\end{align*}

The form of $i\mf p$ here is similar to that of the quaternionic Grassmannian, type $CII$; ${\bf w} \in T_0^{(2)} \cap \phi(SO(p+q)/SO(p)\times SO(q))$ if and only if ${\bf w}$ is symmetric across the anti-diagonal and has an equal (even) number of $-1$ entries in the middle $q \times q$ block, the ``$SO(q)$ part," as in the outer $p/2 \times p/2$ outer blocks combined, the ``$SO(p)$ part."    (Notice that if $q$ is odd, the middle diagonal entry must be positive $1$.) There are $\begin{pmatrix} p/2 + \lfloor q/2 \rfloor \\ p/2 \end{pmatrix}$ such ${\bf w}$.

If we restrict our attention to even-dimensional real projective space, $SO(2n+1)/SO(2n)\times SO(1) \cong \mb RP^{2n}$, then by the reasoning above, there can be no $-1$ entries in any ${\bf w} \in T_0^{(2)}$ in $\phi(\mb RP^{2n})$.  Thus, the only ${\bf w}$ present for $\mb RP^{2n}$ is the identity matrix, verifying that $\Sigma_1^{\phi(\mb RP^{2n})}$ is connected.

\bigskip

Case 2: $p$ and $q$ are both odd.  Now $\theta = Ad(\hat{I})$ where $\hat{I} =$
\begin{equation*}
\left[
\begin{array}{cccccccccccccc}
 & & & & & & & & & & & & &  \\
 & 1_{\frac{p-1}{2}\times \frac{p-1}{2}} & & & 0 & & 0 & 0 & & 0 & & & 0 & \\
 & & & & & & & & & & & & &  \\
 & 0 & & & -1_{\frac{q-1}{2}\times \frac{q-1}{2}} & & 0 & 0 & & 0 & & & 0 & \\
 & & & & & & & & & & & & &  \\
 & 0 & & & 0 & & 0 & 1 & & 0 & & & 0 & \\
 & 0 & & & 0 & & 1 & 0 & & 0 & & & 0 & \\
 & & & & & & & & & & & & &  \\
 & 0 & & & 0 & & 0 & 0 & & -1_{\frac{q-1}{2}\times \frac{q-1}{2}} & & & 0 & \\
 & & & & & & & & & & & & &  \\
 & 0 & & & 0 & & 0 & 0 & & 0 & & & 1_{\frac{p-1}{2}\times \frac{p-1}{2}} & \\
 & & & & & & & & & & & & & 
\end{array}
\right]
\end{equation*}
The automorphism $\theta$ is an outer automorphism, since $\lambda\hat{I} \not\in SO(p+q)$ for all $\lambda \in \mb C$.  Still, $\theta$ meets the criteria of Lemma \ref{addmult}, and so the proof of Proposition \ref{conjugation} may be applied to this case.

\noindent Block Structure:
\begin{equation*}
\mf k = \left\{ \left[
\begin{array}{cccccccccccccc}
 & & & & & & & & & & & & &  \\
 & A & & & 0 & & u & u & & 0 & & & B & \\
 & & & & & & & & & & & & &  \\
 & 0 & & & C & & -v & v & & D & & & 0 & \\
 & & & & & & & & & & & & &  \\
 & -u^* & & & v^* & & 0 & 0 & & -v^\tau & & & -u^\tau & \\
 & -u^* & & & -v^* & & 0 & 0 & & v^\tau & & & -u^\tau & \\
 & & & & & & & & & & & & &  \\
 & 0 & & & -D^* & & v^{*\tau} & -v^{*\tau} & & -C^\tau & & & 0 & \\
 & & & & & & & & & & & & &  \\
 & -B^* & & & 0 & & u^{*\tau} & u^{*\tau} & & 0 & & & -A^\tau & \\
 & & & & & & & & & & & & & 
\end{array}
\right] \right\}
\end{equation*}
where $A=-A^*, C=-C^*, B=-B^\tau, D=-D^\tau$.  That is, \\
$
\begin{bmatrix}
A & u & B \\
-u^* & 0 & -u^\tau \\
-B^* & u^{*\tau} & -A^\tau
\end{bmatrix}
 \in \mf s\mf o(p)$, and $
\begin{bmatrix}
C & v & D \\
-v^* & 0 & -v^\tau \\
-D^* & v^{*\tau} & -C^\tau
\end{bmatrix}
 \in \mf s\mf o(q)$.  
\begin{equation*}
i\mf p = \left\{ \left[
\begin{array}{cccccccccccccc}
 & & & & & & & & & & & & &  \\
 & 0 & & & Z_1 & & w_1 & -w_1 & & Z_2 & & & 0 & \\
 & & & & & & & & & & & & &  \\
 & -Z_1^* & & & 0 & & w_2 & w_2 & & 0 & & & -Z_2^\tau & \\
 & & & & & & & & & & & & &  \\
 & -w_1^* & & & -w_2^* & & is & 0 & & -w_2^\tau & & & w_1^\tau & \\
 & w_1^* & & & -w_2^* & & 0 & -is & & -w_2^\tau & & & -w_1^\tau & \\
 & & & & & & & & & & & & &  \\
 & -Z_2^* & & & 0 & & w_2^{*\tau} & w_2^{*\tau} & & 0 & & & -Z_1^\tau & \\
 & & & & & & & & & & & & &  \\
 & 0 & & & Z_2^{*\tau} & & -w_1^{*\tau} & w_1^{*\tau} & & Z_1^{*\tau} & & & 0 & \\
 & & & & & & & & & & & & & 
\end{array}
\right] \right\}
\end{equation*}
where $Z_1,Z_2 \in M_{\frac{p-1}{2} \times \frac{q-1}{2}}(\mb C),\ w_1 \in \mb C^{\frac{p-1}{2}},\ w_2 \in \mb C^{\frac{q-1}{2}}$.

The form we have chosen for $i\mf p$ reveals the presence of $i\mf a_0$ in the center two diagonal entries. For example, if $p=5$ and $q=1$ then $U/K \cong \mb RP^5$, so by \eqref{dformula} we have $d(g)=$
\[
diag\left( \tfrac{1+s^2+4|z_1|^2}{1+s^2+4|z_1|^2+4|z_2|^2}, \tfrac{1+s^2}{1+s^2+4|z_1|^2}, \tfrac{1+is}{1-is}, \tfrac{1-is}{1+is}, \tfrac{1+s^2+4|z_1|^2}{1+s^2}, \tfrac{1+s^2+4|z_1|^2+4|z_2|^2}{1+s^2+4|z_1|^2} \right).
\] 

By Proposition \ref{connected}, the connected components of $\Sigma_1^{\phi(U/K)}$ are indexed by $T_0^{(2)}/exp(i\mf a_0)^{(2)}$.  Therefore, to identify a unique representative for each connected component, we set $s=0$; that is, we require that the middle two diagonal entries of ${\bf w}$ are positive. These representatives of elements in $T_0^{(2)}/exp(i\mf a_0)^{(2)}$ are those ${\bf w}$ that are symmetric across the anti-diagonal and have the same number of $-1$ entries in the inner $\lfloor q/2 \rfloor \times \lfloor q/2 \rfloor$ blocks, the ``$O(q)$ part," as in the outer $\lfloor p/2 \rfloor \times \lfloor p/2 \rfloor$ blocks, the ``$O(p)$ part." There are $\begin{pmatrix} \lfloor p/2 \rfloor + \lfloor q/2 \rfloor \\ \lfloor p/2 \rfloor \end{pmatrix}$ such ${\bf w}$.

It follows that for odd-dimensional real projective space, $\mb RP^{2n+1} \cong SO(2n+2)/(SO(1)\times SO(2n+1))$,  $\Sigma_1^{\phi(\mb RP^{2n+1})}$ is connected.


\appendix

\section{$\theta$-Stable Representations}

We are motivated to use the following representations of $\mf s\mf o(n)$
and $\mf s\mf p(n)$ in $\mf s\mf u(n)$ because they are the fixed
point sets of involutions that preserve the triangular decomposition
of $\mf s\mf l(n,\mb C) = \mf n^- \oplus \mf h \oplus \mf n^+$ where $\mf h$
consists of diagonal matrices, and $\mf n^+$ ($\mf n^-$) consists of
upper (lower) triangular matrices. 

Let $\tau:\mf s\mf l(n,\mb C) \to \mf s\mf l(n,\mb C)$ be the anti-transpose map given by reflection across the anti-diagonal; that is, $X^{\tau} = J X^t J^{-1}$ where $J$ is the $n \times n$ matrix whose entries are ones on the anti-diagonal and zeros elsewhere.  Then $X \mapsto -X^{\tau}$ is an involution of $\mf s\mf l(n,\mb C)$ that stabilizes the above triangular decomposition.  The restriction to $\mf s\mf u(n)$ is also such an involution.

\begin{prop} $\mf s\mf o(n) \cong \{X\in\mf s\mf u(n)|-X^\tau=X\}$, and \\
                        $\mf s\mf p(n) \cong \{X\in\mf s\mf u(2n)|-X^\tau=Ad(I_n)X\}$. 
\end{prop}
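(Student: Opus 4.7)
The plan is, in each case, to recognize the subset on the right as the fixed point set of a real involution of $\mf s\mf u(n)$ (respectively $\mf s\mf u(2n)$), to interpret its complexification as a ``rotated'' complex orthogonal or symplectic Lie algebra, and then to identify the result with $\mf s\mf o(n)$ or $\mf s\mf p(n)$ by invoking uniqueness of the compact real form.

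First I would unfold the defining condition in the orthogonal case. Since $X^\tau = JX^tJ^{-1}$ and $J^{-1}=J$, the condition $-X^\tau = X$ rewrites as $X^tJ + JX = 0$, which is precisely the infinitesimal condition that $X$ preserves the non-degenerate symmetric bilinear form $B(v,w) = v^t J w$ on $\mb C^n$. Hence $\{X \in \mf s\mf l(n,\mb C) : -X^\tau = X\}$ is the complex Lie algebra $\mf s\mf o(B, \mb C)$, which is isomorphic to the standard $\mf s\mf o(n, \mb C)$ via conjugation by any $C \in GL(n,\mb C)$ satisfying $C^t J C = I$. Now $\{X \in \mf s\mf u(n) : -X^\tau = X\}$ is a real Lie subalgebra of $\mf s\mf u(n)$ whose complexification is $\mf s\mf o(B,\mb C)$, and its Killing form is negative definite (inherited from $\mf s\mf u(n)$), so it is a compact real form of $\mf s\mf o(B,\mb C)$. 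Since compact real forms of a complex semisimple Lie algebra are unique up to isomorphism, this real form must be isomorphic to $\mf s\mf o(n)$.

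The symplectic statement proceeds in the same spirit. Setting $\Omega = J I_n$, one verifies that $\Omega^t = -\Omega$ and that the condition $-X^\tau = Ad(I_n) X$ rewrites as $X^t \Omega + \Omega X = 0$, so that $\{X \in \mf s\mf l(2n,\mb C) : -X^\tau = Ad(I_n)X\}$ is the complex Lie algebra $\mf s\mf p(\Omega, \mb C) \cong \mf s\mf p(n, \mb C)$ preserving the associated symplectic form. Intersecting with $\mf s\mf u(2n)$ once again produces a compact real form, and hence a copy of $\mf s\mf p(n)$.

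The main obstacle is more bookkeeping than mathematical depth: one has to be comfortable either invoking the uniqueness of compact real forms or, alternatively, producing explicit intertwiners. The hands-on route is to construct a unitary $C$ with $C^t J C = I$ in the orthogonal case (such a $C$ exists because $J$ is real, symmetric, and unitary, hence admits a unitary square root), and an analogous unitary intertwiner for $\Omega$ in the symplectic case; conjugation by such a $C$ then sends the standard $\mf s\mf o(n)$ (resp.\ $\mf s\mf p(n)$) bijectively onto the displayed set, yielding the claimed Lie algebra isomorphism.
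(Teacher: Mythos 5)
Your argument is correct, and your two suggested routes bracket the paper's actual proof. The paper takes only the explicit-intertwiner route: it sets $\Theta_0(X)=-X^t$, $\Theta_1(X)=-X^\tau$, and $P=\tfrac{1}{\sqrt2}(J+i1)$, checks that $\Theta_1=Ad_P\circ\Theta_0\circ Ad_{P^{-1}}$ (equivalently, $PP^t=iJ$), and concludes that the unitary conjugation $Ad_P$ carries the standard $\mf s\mf o(n)\subseteq\mf s\mf u(n)$ onto the displayed set, with the symplectic case handled the same way. So your ``hands-on'' fallback is in substance the paper's proof, with your unitary square root of $J$ playing the role of $P$; your reformulation of $-X^\tau=X$ as preservation of the bilinear form $v^tJw$ (resp.\ $v^tJI_nw$) is a conceptual gloss the paper omits but which makes the existence of an intertwiner transparent. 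Your primary route via uniqueness of the compact real form is genuinely different and also works, but note two points you pass over quickly: you must check that the displayed real subspace is actually a real form of $\mf s\mf o(B,\mb C)$, i.e.\ that its complexification is everything --- this follows because $\mf s\mf o(B,\mb C)$ is stable under $X\mapsto -X^*$ (using that $J$ is real), so it splits into skew-Hermitian and Hermitian parts; and the Killing form of the subalgebra is not literally ``inherited'' by restriction from that of $\mf s\mf u(n)$, though for these classical algebras it is a positive multiple of $\Tr(XY)$, which is negative definite on skew-Hermitian matrices, so compactness does hold. The explicit route buys a concrete isomorphism compatible with the ambient triangular structure; the abstract route buys brevity but only establishes existence of an isomorphism rather than exhibiting one.
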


\begin{proof} Define involutions on $\mf s\mf l(n,\mb C)$ by
\begin{equation*}
\Theta_0(X) = -X^t, \quad \Theta_1(X) = -X^\tau = -JX^tJ, \quad \text{for} \quad X \in \mf s\mf l(n,\mb C)
\end{equation*}
Then $\Theta_1 = Ad_J \circ \Theta_0$.  Let $P = \frac{1}{\sqrt{2}}(J + i1)$.  A straightforward calculation shows that $\Theta_1 = Ad_P \circ \Theta_0 \circ Ad_{P^{-1}}$.  Thus the Lie algebra isomorphism $Ad_P: \mf s\mf l(n,\mb C) \to \mf s\mf l(n,\mb C)$ maps the fixed point subalgebra of $\Theta_0$ to that of $\Theta_1$.  This completes the proof for $\mf s\mf o(n)$; the proof for $\mf s\mf p(n)$ follows similarly.  
\end{proof}

\noindent {\it Note}: This representation of $\mf s\mf o(n)$ is the space
of infinitesimal isometries of the following $n$ (real) dimensional
subspace of $\mb C^n$:
\begin{equation*}
\{ \left[
\begin{array}{c}
x_n - ix_1 \\
\vdots \\
x_1 - ix_n
\end{array}
\right] |\ x_j \in \mb R \}.
\end{equation*}

\bibliographystyle{alpha.bst}
\bibliography{habermas2}

\end{document}